\newcommand{\Cay}{\mbox{Cay}}
\newcommand{\Aut}{\mbox{Aut}}
\newtheorem{exm}{Example}[subsection]
\newtheorem{thm}{Theorem}[subsection]
\newtheorem{thms}{Theorem}[section]
\newtheorem{dfn}{Definition}[subsection]
\newtheorem{cors}{Corollary}[section]
\newtheorem{lems}{Lemma}[section]
\begin{document}
\title{Using Schur Rings to Produce GRRs for Dihedral Groups.}
\author{Jonathan Ebejer, Josef Lauri \\University of Malta}

\maketitle

\vspace{4mm}


\begin{abstract}
In this paper we shall be looking at several results relating Schur rings to sufficient conditions for a graph to be a graphical regular representation (GRR) of  finite group, and then applying these specifically in the case of certain subfamilies of dihedral groups. Numerical methods are given for constructing trivalent GRRs for these dihedral groups very quickly.
\end{abstract}

\section{Foundations}

\subsection{Introduction}

\begin{dfn}[Graphical\ Regular\ Representations]
	Let $\Gamma$ be a finite group and let $G$ be a graph. If $\Aut(G) \equiv \Gamma$ acts regularly on $V(G)$, then we say that $G$ is a \textbf{graphical regular representation} (GRR) of $\Gamma$.
\end{dfn}


The question asking which finite groups have at least one GRR has been completely solved \cite{Go, He, Im, Im2, ImWa, ImWa2, NoWa, NoWa2}. Specifically:

\begin{enumerate}
	\item The only abelian groups which have a GRR are $\mathbb{Z}^n_2$ for $n\geq5$.
	
	\item Except for generalised dicyclic groups and a finite number of known groups, all non-abelian groups have a GRR.
\end{enumerate}

However, it remains a challenge to discern whether a group known to have GRRs has GRRs with specific properties, such as being trivalent. It is well-known, by means of Frucht's Theorem \cite{BeWi}, that every group is isomorphic to the automorphism group of some trivalent graph but it is not guaranteed that this graph is a GRR.  Furthermore, it is not known how one can easily construct such GRRs when they do exist. In Section 2 we will be using the foundations laid out in this section to produce infinite families of trivalent GRRs for certain dihedral groups. Coxeter, Frucht, and Powers had given a thorough treatment of trivalent GRRs in their book \emph{Zero-Symmetric
Graphs: Trivalent Graphical Representations of Groups} \cite{Cox} which shows how interesting such GRRs can be.

\begin{dfn}[Cayley graphs]
The \textbf{Cayley graph} of a group $\Gamma$ is the graph, denoted by $\Cay(\Gamma, S)$, whose vertex-set is $\Gamma$ and two vertices $u$ and $v$ are adjacent if $v=us$ where $s\in S$ and $S$ is a subset of $\Gamma$ such that $1\not\in S$, $S$ generates $\Gamma$, and $S^{-1}=S$. We call the set $S$ the \textbf{connecting set} of the Cayley graph.

In dealing with Schur rings we might encounter Cayley graphs where the condition $S^{-1}=S$ is not satisfied. We refer to such graphs as Cayley digraphs. 
\end{dfn}

It is an easy and well-known corollary of Sabidussi's Theorem \cite{Sa} that if a graph $G$ is a GRR of a group $\Gamma$, then it isomorphic to a Cayley graph $\Cay(\Gamma,S)$ of $\Gamma$.  Therefore, if $G$ is cubic then the connecting set must either consist of three elements all of order 2 or else it must consist of one element $a$ and its inverse $a^{-1}\not=a$ and one other element $b$ of order 2. 

\subsection{Schur rings}

We now present an introduction to Schur rings which is just sufficient for our purposes.



\begin{dfn}[Group Rings]
Let $\Gamma$ be a finite group. Then the \textbf{group ring} $\mathbb{Z}[\Gamma]$ consists of all formal linear combinations 
	\[ \sum_{\gamma\in \Gamma} z_\gamma\cdot \gamma \]
	Where each $z_\gamma$ is in $\mathbb{Z}$.
	Addition and multiplication in $\mathbb{Z}[\Gamma]$ is carried out in the natural way.
	If $B=\{b_1,b_2,\ldots,b_p\}\subseteq \Gamma$, then the element 
	\[ b_1 + b_2 +\ldots+b_p\]
	of $\mathbb{Z}[\Gamma]$ is denoted by
	$\overline{B}$ and it is said to be  a \textbf{simple quantity} of $\mathbb{Z}[\Gamma]$.
\end{dfn}

\begin{dfn}[Schur Rings]
A subring $\mathcal{S}$ of the group ring $\mathbb{Z}[\Gamma]$ is called a Schur ring or an $\mathcal{S}$-ring over $\Gamma$, of rank $r$, if the following conditions hold:
\begin{itemize}
	\item $\mathcal{S}$ is closed under addition and multiplication including multiplication by elements of  $\mathbb{Z}$ from the left (i.e. $\mathcal{S}$ is a $\mathbb{Z}[\Gamma]$-module).
	\item Simple quantities $\overline{B}_{0},\overline{B}_{1},...,\overline{B}_{r-1}$ exist in $\mathcal{S}$ such that every element ${C} \in \mathcal{S}$ has a unique representation:
	\[{C}=\sum_{i=0}^{r-1}\alpha_{i}\overline{B}_{i}\] where the $\alpha_i$ are integers.
	\item $\overline{B}_{0}=\overline{\{1\}}$.
	\item $\displaystyle \sum_{i=0}^{r-1}\overline{B}_{i}=\overline{\Gamma}$, that is, $\lbrace B_{0},B_{1},...,B_{r-1}\rbrace$ is a partition of $\Gamma$.
	\item For every $i\in \lbrace 0,1,2,...,r-1 \rbrace$ there exists a $j \in \lbrace 0,1,2,...,r-1 \rbrace$ such that $\overline{B}_{j}=\overline{B}^{-1}_{i} (=\overline{\lbrace b^{-1} : b \in B_{i} \rbrace )}$.
\end{itemize}
\end{dfn}

We call the set of simple quantities $\overline{B}_{0},\overline{B}_{1},\ldots,\overline{B}_{r-1}$the \textbf{basis} of the Schur ring and we denote it by ${\mathcal B}[{\mathcal S}]$. Each simple quantity $\overline{B}_i$ of the basis is referred to as  a \textbf{basic element} of the Schur ring. Sometimes we need to refer to the set $B_i$ which we call a \textbf{basic set}. If $\gamma \in \Gamma$ and $\overline{\{\gamma\}}$ is a basic element of $\mathcal{S}$, then we say the $\gamma$ is \textbf{isolated} or a \textbf{singleton} in ${\mathcal B}[{\mathcal S}]$. If the basis of a Schur ring is completely made up of singletons then we say that the Schur ring is \textbf{trivial}.

We say a Schur ring is \textbf{larger} than another if its basic sets form a partition which is finer than the partition formed by the other's basic sets. The ``largest'' Schur ring of a group $\Gamma$ is therefore the Schur ring whose basic sets form the finest possible partition of $\Gamma$, that is, the basic sets are all the singleton elements of $\Gamma$. Similarly, we say a Schur ring is \textbf{smaller} than another if its basic sets form a partition which is coarser than the partition formed by the other's basic sets. The ``smallest'' Schur ring is therefore the one which gives the coarsest partition, with basic sets $\{1\}$ and $\Gamma - \{1\}$.


The example below  deals with a special kind of Schur ring known as a $\textbf{cyclotomic}$ Schur ring  \cite{De, Wi}.


\begin{exm}
	Let $\Delta$ be a group of automorphisms of the group $\Gamma$. Then the orbits of $\Gamma$ under the action of $\Delta$ form the basic sets of a Schur ring over $\Gamma$.
		
	\bigskip	
	In particular, the conjugacy classes of $\Gamma$ form the basic sets of a Schur ring over $\Gamma$. The basic elements in this case would here be the well-known ``class sums'' which are very important in linear representations of groups.
\end{exm}
	
Because of closure under multiplication, the product of two linear combinations of $\overline{B}_{0},\overline{B}_{1},...,\overline{B}_{r-1}$ must also be a linear combination of these simple quantities. Therefore we can make the following definition:

\vspace{2mm}

\begin{dfn}[Structure Constants]
Let $\overline{B}_i$ and $\overline{B}_j$ be two basis elements of an r-rank Schur ring $\mathcal{S}$. For all values $i, j, k \in [r]$ there exist non-negative integers $\beta_{i,j}^{k}$ called \textbf{structure constants}, such that
\[\overline{B}_{i}\cdot\overline{B}_{j}=\sum_{k=1}^{r} \beta_{i,j}^{k}\overline{B}_{k}\]
\end{dfn}

We shall soon see that the structure constants have a very nice graph theoretic interpretation.


\begin{exm}\label{exm:basiccayley}
	Let $\Gamma$ be the cyclic group $\langle \gamma: \gamma^8=1\rangle$. The following simple quantities are the basic sets of a Schur ring of $\Gamma$:
	\[ \{\{1\}, \{\gamma^1,\gamma^5 \}, \{\gamma^3, \gamma^7\}, \{\gamma^2,\gamma^6 \}, \{\gamma^4\} \}. \]
	 
	One can verify that, if we let $B_0=\{1\}, B_1=\{\gamma^1,\gamma^5\}, B_2=\{\gamma^3,\gamma^7\}, B_3=\{\gamma^2,\gamma^6\}$, and $B_4=\{\gamma^4\}$, then,
	$\overline{B_2}\cdot\overline{B_4} = \overline{B_2}$, therefore $\beta_{2,4}^2=1$ while all the other $\beta^k_{2,4}=0$.

\end{exm}

\subsection{Graph theoretic interpretation}

\begin{dfn}[Basic Cayley Graphs]
Let $\mathcal{S}$ be a Schur ring of a group $\Gamma$. We can construct a Cayley (di)graph $\Cay(\Gamma,B_i)$ for each of the basic sets $B_i$ of $\mathcal{S}$. These are called the \textbf{basic Cayley graphs} associated with the Schur ring.
\end{dfn}

The basic Cayley graphs associated with the Schur ring in the previous example are as shown in Figure \ref{fig:basiccayley}.  

	\begin{figure} 
	\includegraphics[width=\linewidth]{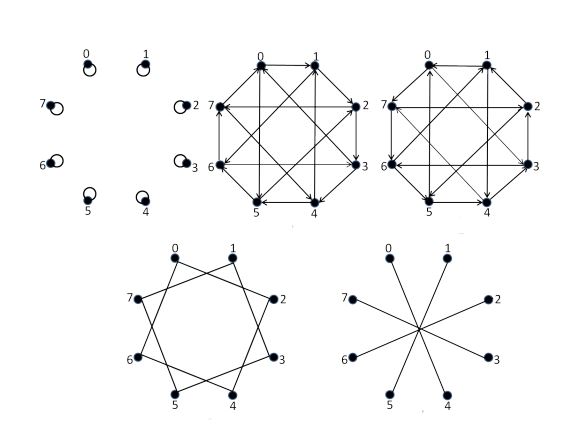}		
	\caption{The basic Cayley graphs associated with the Schur ring in Example \ref{exm:basiccayley}} \label{fig:basiccayley} 
	\end{figure}
		
We can now give an interpretation to the structure constants $\beta^k_{i,j}$: pick \emph{any} edge (or arc in the case of a Cayley digraph) $ab$ in $\Cay(G,B_k)$ and count how many walks there are from $a$ to $b$ by first passing through an edge/arc in $\Cay(G,B_i)$ followed by an edge/arc in $\Cay(G, B_j)$.  This is the value of $\beta^k_{i,j}$.

One can				
verify here that $\beta^1_{2,3}=2$ and $\beta^4_{2,1}=2$, for example.

\subsection{The Colour Graph}

Basic Cayley graphs such as the ones in Figure \ref{fig:basiccayley} can be superimposed on the vertex-set consisting of the elements of $\Gamma$ with each one having its edges/arcs coloured a different colour. This gives a colouring of the arc-set of the complete graph where the convention is to represent two arcs $(u,v), (v,u)$ which happen to have the same colour represented as an edge of that colour.
Figure \ref{fig:colourgraph} shows the previous set of basic Cayley graphs without the loops.

\begin{figure}
\begin{center}

\includegraphics[width= 200pt]{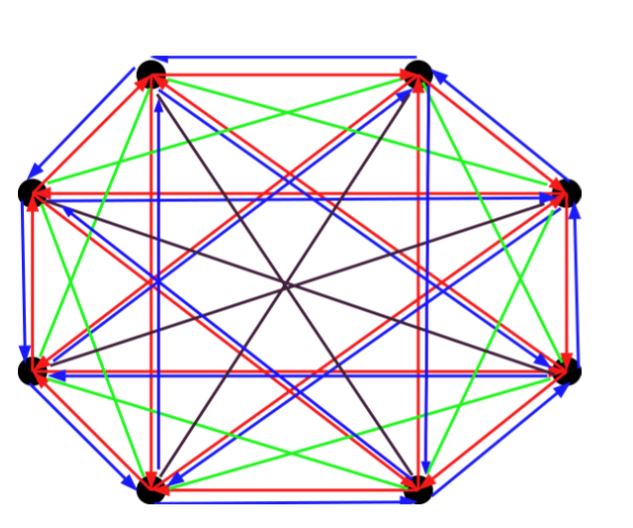}
\caption{The colour graph for the Schur ring in Example \ref{exm:basiccayley}}
\label{fig:colourgraph}
\end{center}
\end{figure}

As we did above, we can verify here from the colour graph that $\beta^{red}_{blue,green}=2=\beta^{black}_{blue,red}$, for example. 
	
The adjacency matrices of the basic Cayley (di)graphs form what is called a \textbf{coherent configuration}.

\subsection{Automorphism groups of Schur rings}
	
\begin{dfn}[Automorphisms of Schur Rings]
The automorphism group of a Schur ring is defined to be the intersection of all automorphism groups of the basic Cayley graphs of the Schur ring. For any Schur ring $\mathcal{S}$ we will denote its automorphism group as $Aut(\mathcal{S})$.
\end{dfn}


The smallest (coarsest) Schur ring has the largest automorphism group, that is, the symmetric group $S_n$, where $n$ is the order of the group. The largest Schur ring has the smallest automorphism group, that is,  the regular action of the group on itself.
	
\medskip	
		Let $C$ be a subset of the group $\Gamma$. Then $\langle\langle C \rangle\rangle$ is the smallest (coarsest) Schur ring of $\Gamma$ containing $\overline{C}$.

Theorem \ref{thm:aut<<C>>}\cite{MuPo} and Theorem \ref{thm:trivial<<C>>} \cite{Wei} will be very important going forward.



\begin{thm}\label{thm:aut<<C>>}
	$\Aut(\Cay(\Gamma,C))=\Aut(\langle\langle C \rangle\rangle )$.
\end{thm}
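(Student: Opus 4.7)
My plan is to prove the two inclusions separately. The inclusion $\Aut(\langle\langle C \rangle\rangle)\subseteq \Aut(\Cay(\Gamma,C))$ will follow directly from the definitions, while the reverse inclusion will rest on the classical theorem of Schur (and Wielandt) that a transitive permutation group $G$ on $\Gamma$ which contains the regular representation of $\Gamma$ yields a Schur ring over $\Gamma$ whose basic sets are precisely the orbits of the stabilizer $G_1$ on $\Gamma$.

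For the easy direction, I would begin by noting that since $\overline{C}\in \langle\langle C \rangle\rangle$, the set $C$ decomposes as a disjoint union $C=B_{i_1}\cup\cdots\cup B_{i_m}$ of basic sets of $\langle\langle C \rangle\rangle$. Any $\sigma\in \Aut(\langle\langle C \rangle\rangle)$ preserves each of the basic Cayley graphs $\Cay(\Gamma,B_{i_j})$ by the very definition of the automorphism group of a Schur ring; since the arcs of $\Cay(\Gamma,C)$ are obtained simply by superimposing those of the $\Cay(\Gamma,B_{i_j})$, $\sigma$ preserves $\Cay(\Gamma,C)$ as well.

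For the harder direction, let $G=\Aut(\Cay(\Gamma,C))$. Because left multiplication by any $\gamma\in\Gamma$ is always an automorphism of a Cayley graph, $G$ contains a regular copy of $\Gamma$, so Schur's theorem applies: the orbits $T_0=\{1\},T_1,\ldots,T_{s-1}$ of the stabilizer $G_1$ on $\Gamma$ are the basic sets of a Schur ring $\mathcal{T}$ over $\Gamma$, and by construction every element of $G$ preserves each basic Cayley graph of $\mathcal{T}$, i.e.\ $G\subseteq \Aut(\mathcal{T})$. Since $G$ preserves $\Cay(\Gamma,C)$ and $C$ is the out-neighborhood of the vertex $1$, the set $C$ is $G_1$-invariant and hence a union of the $T_i$'s; in particular $\overline{C}\in\mathcal{T}$. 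By the minimality of $\langle\langle C \rangle\rangle$ among Schur rings containing $\overline{C}$, the partition associated with $\mathcal{T}$ refines that of $\langle\langle C \rangle\rangle$. Consequently every basic Cayley graph of $\langle\langle C \rangle\rangle$ is a union of basic Cayley graphs of $\mathcal{T}$, and since $G$ preserves all of the latter it preserves all of the former. This gives $G\subseteq \Aut(\langle\langle C \rangle\rangle)$.

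The main obstacle will be the clean appeal to Schur's theorem producing $\mathcal{T}$. Verifying directly that the $G_1$-orbits form the basic sets of a Schur ring reduces to establishing closure under products, which in turn amounts to showing that the number of walks from $a$ to $b$ first along an arc of type $T_i$ then an arc of type $T_j$ depends only on the $G_1$-orbit containing $a^{-1}b$. This is the coherence statement underlying the whole theory and was foreshadowed in the paper's remark that the basic Cayley graphs form a coherent configuration, so I would cite it as a black box rather than reprove the double-counting argument here.
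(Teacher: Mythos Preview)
The paper does not actually supply a proof of this theorem; it is stated with a citation to Muzychuk and Ponomarenko \cite{MuPo} and then used as a black box. So there is no in-paper argument to compare against.

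That said, your sketch is the standard route and is correct. The only point I would tighten is the sentence ``by construction every element of $G$ preserves each basic Cayley graph of $\mathcal{T}$'': this is true, but it is not quite ``by construction'' --- it is the statement that the orbitals of $G$ on $\Gamma\times\Gamma$ coincide (via the regular subgroup) with the arc-sets of the $\Cay(\Gamma,T_i)$, which deserves one line of justification. Once that is said, the chain $\overline{C}\in\mathcal{T}\Rightarrow \langle\langle C\rangle\rangle\subseteq\mathcal{T}\Rightarrow \Aut(\mathcal{T})\subseteq\Aut(\langle\langle C\rangle\rangle)$ goes through exactly as you wrote, and together with the easy inclusion you obtain the equality. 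Your appeal to Schur's theorem as a black box is entirely appropriate here and is precisely how the cited reference builds the transitivity module.
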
 


\begin{thm}\label{thm:trivial<<C>>}
If $\langle \langle C \rangle \rangle$ is trivial then $Cay(\Gamma,C)$ is a GRR of $\Gamma$. 
\end{thm}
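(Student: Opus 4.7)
The plan is to deduce the statement by combining Theorem \ref{thm:aut<<C>>} with a direct analysis of the automorphism group of a trivial Schur ring. By Theorem \ref{thm:aut<<C>>} we have $\Aut(\Cay(\Gamma,C)) = \Aut(\langle\langle C \rangle\rangle)$, so it suffices to show that when $\langle\langle C \rangle\rangle$ is trivial, its automorphism group is precisely the left regular representation of $\Gamma$ acting on itself, since this is what it means for $\Cay(\Gamma,C)$ to be a GRR.

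First I would unpack the definition of $\Aut(\langle\langle C \rangle\rangle)$. Because the Schur ring is trivial, every basic set is a singleton $\{\gamma\}$ with $\gamma \in \Gamma$, so every element of $\Gamma$ gives rise to a basic Cayley digraph $\Cay(\Gamma,\{\gamma\})$, and $\Aut(\langle\langle C \rangle\rangle)$ is by definition the intersection of the automorphism groups of all these digraphs. In $\Cay(\Gamma,\{\gamma\})$ the unique out-arc from a vertex $u$ goes to $u\gamma$, so any $\phi \in \Aut(\langle\langle C \rangle\rangle)$ must satisfy $\phi(u\gamma) = \phi(u)\gamma$ for every $u \in \Gamma$ and every $\gamma \in \Gamma$.

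Next I would exploit this rigidity. Setting $u = 1$ in $\phi(u\gamma) = \phi(u)\gamma$ yields $\phi(\gamma) = \phi(1)\gamma$ for every $\gamma \in \Gamma$, so $\phi$ is completely determined by $\phi(1)$ and in fact coincides with left multiplication by $\phi(1)$. Conversely, every left multiplication trivially preserves each singleton basic Cayley digraph. Hence $\Aut(\langle\langle C \rangle\rangle)$ is exactly the left regular representation of $\Gamma$; it has order $|\Gamma|$ and acts regularly on the vertex set. Combining this with Theorem \ref{thm:aut<<C>>} identifies $\Aut(\Cay(\Gamma,C))$ with the regular action of $\Gamma$, which is precisely the condition for $\Cay(\Gamma,C)$ to be a GRR.

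There is no real obstacle once Theorem \ref{thm:aut<<C>>} is in hand; the argument is essentially bookkeeping. The one subtlety worth noting is that we need the identity $\phi(u\gamma) = \phi(u)\gamma$ to hold for \emph{every} $\gamma \in \Gamma$, not merely for $\gamma \in C$ — and this is exactly what triviality of $\langle\langle C \rangle\rangle$ supplies, since every element of $\Gamma$ then appears as its own basic set. Without the triviality hypothesis the same reasoning would only force $\phi$ to commute with multiplication by elements of $C$, which is not enough to pin $\phi$ down as a left multiplication.
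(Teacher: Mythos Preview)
Your argument is correct. The paper itself does not give a proof of this theorem; it simply cites it from \cite{Wei} and remarks elsewhere that ``the largest Schur ring has the smallest automorphism group, that is, the regular action of the group on itself,'' which, together with Theorem~\ref{thm:aut<<C>>}, is exactly the combination you spell out. Your proof supplies the details behind that remark: the key computation $\phi(u\gamma)=\phi(u)\gamma$ for all $\gamma\in\Gamma$ forces $\phi$ to be left multiplication by $\phi(1)$, and triviality of the Schur ring is precisely what guarantees this identity for \emph{every} $\gamma$ rather than just those in $C$. There is nothing to add.
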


\subsection{Combining Results}
	
Our recipe for finding GRRs for a group $\Gamma$ is simple. First, find $C$ such that $\langle\langle C \rangle\rangle$ is trivial. Theorem 1.5.2 then gives us that, since $\langle \langle C \rangle\rangle $ is trivial, $\Aut(\Cay(\Gamma,C)) = \Gamma$ and so $\Cay(\Gamma,C)$ is a GRR of $\Gamma$.

Our work in Section 2 combines these two steps into one by identifying properties of a connecting set of a dihedral group which imply that the Cayley graph of that set must be a GRR. By selecting a connecting set consisting of three elements of order 2, we will also be producing trivalent graphs. One last result about Schur rings will be our main computational tool going forward. A proof is given in \cite{KlRuRuTi}.

\begin{thm}[Schur-Wielandt Principle]
Let $r=\sum z_\gamma \gamma$ be an element of a Schur ring $\mathcal{S}$ of a group $\Gamma$. Then, for any integer $k$, the sum $\sum_{z_\gamma=k} \gamma$ is also in $\mathcal{S}$.
\end{thm}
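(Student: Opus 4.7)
The plan is to exploit the unique representation property built into the definition of a Schur ring together with the fact that the basic sets form a \emph{partition} of $\Gamma$. Given $r\in\mathcal{S}$, axiom (ii) gives a unique expansion $r=\sum_{i=0}^{r-1}\alpha_i\overline{B}_i$ with $\alpha_i\in\mathbb{Z}$. The key observation I would make first is that the coefficient of any $\gamma\in\Gamma$ in this expansion depends only on which basic set contains $\gamma$: since the $B_i$ partition $\Gamma$ (axiom (iv)), every $\gamma$ lies in exactly one $B_{i(\gamma)}$, and reading off the coefficient of $\gamma$ on both sides of $r=\sum_i \alpha_i\sum_{b\in B_i}b$ gives $z_\gamma=\alpha_{i(\gamma)}$.

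From this the conclusion is immediate. For a given integer $k$, the set $\{\gamma\in\Gamma : z_\gamma=k\}$ is precisely the union of those basic sets $B_i$ whose label satisfies $\alpha_i=k$; in particular it is a union of complete basic sets, never a proper subset of one. Hence
\[
\sum_{z_\gamma=k}\gamma \;=\; \sum_{i:\,\alpha_i=k}\overline{B}_i,
\]
which is a $\mathbb{Z}$-linear combination of basis elements of $\mathcal{S}$ and therefore lies in $\mathcal{S}$.

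I do not expect any real obstacle here: the entire content of the statement is that the basis $\mathcal{B}[\mathcal{S}]$ is a partition rather than merely a spanning system, so two elements of the same basic set cannot be separated by looking at their coefficients in any member of $\mathcal{S}$. The only thing worth emphasising in the write-up is the direction of implication one is using: the partition axiom forces the coefficient function $\gamma\mapsto z_\gamma$ to be constant on each $B_i$, and the level sets of such a function are automatically unions of the blocks of the partition. No multiplication in $\mathcal{S}$, and no use of the inversion axiom, is required.
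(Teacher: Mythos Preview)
Your argument is correct: the partition axiom forces the coefficient function $\gamma\mapsto z_\gamma$ to be constant on each basic set $B_i$, so each level set $\{z_\gamma=k\}$ is a union of basic sets and its simple quantity is a $\mathbb{Z}$-linear combination of the $\overline{B}_i$, hence in $\mathcal{S}$. There is nothing to compare against here, since the paper does not prove the Schur--Wielandt Principle itself but merely quotes it and refers the reader to \cite{KlRuRuTi}; your self-contained proof is exactly the standard one and would serve as a fine replacement for that citation.
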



So, this result is saying, for example, that if  $a+2b +c + 3d + 2f$ is in $\mathcal{S}$, then so are $a+c$, $b+f$, and $d$.

We finish off this section with a concrete example of how we use these results to show that a particular Cayley graph is a GRR using some simple {\tt GAP} code. \cite{GAP4} used to carry out the algebraic calculations.
	
\begin{exm}
Let $D_7$ be the dihedral group 
	\[ \langle a, b: a^2=b^7=abab=1\rangle.\] 
	
We show that $\Cay(D_7, C)$ where $C = \{a, ab, ab^3, b, b^6\}$ is a GRR of $G$.

	\begin{tt} \small
		\begin{verbatim} 
gap> d7:=DihedralGroup(14);;
gap> Zd7:=GroupRing(Integers, d7);;
gap> a:=Zd7.1;
(1)*f1
gap> b:=Zd7.2;
(1)*f2

gap> a^2;
(1)*<identity> of ...
gap> b^7;
(1)*<identity> of ...

gap> (a+a*b+a*b^3+b+b^6)^2;
(5)*<identity> of ...+
(4)*f1*f2^2+
(2)*f1+(2)*f1*f2+(2)*f2^2+(2)*f1*f2^4+(2)*f2^5+(2)*f1*f2^6+
(1)*f2^3++(1)*f2+(1)*f2^4+(1)*f2^6+
		\end{verbatim}
	\end{tt}

By computing $\overline{C}^2$ in $\mathbb{Z}[\Gamma]$ and using the Schur-Wielandt Principle, we obtain that 
$x= ab^2$ and $y= a + ab +b^2+ ab^4+ b^5 +ab^6$ are in $\langle\langle C\rangle\rangle$. 

Therefore, $xy =a+b^2+b^4+ab^4+b^5+b^6$ is in $\langle\langle C\rangle\rangle$.

But $(xy)^{-1} = a+b^5+b^3+ab^4+ b^2+ b$.
and
basic elements are disjoint, therefore either $b$ or $b+b^3$ is a basic element of  $\langle\langle C \rangle\rangle$. But by squaring the latter, we see that $b^4$ is a basic element of the Schur ring, hence so is $b$. Multiplying $x$ by $b$ repeatedly it follows that so is $a$. Therefore $\mathcal{S}$ is the finest Schur ring on $D_7$ with all singleton sets as basic sets.
	
Therefore $\Aut(\Cay(D_7,C)) = \Aut(\langle\langle C \rangle\rangle) = D_7$.  
\end{exm}

We finish this section with the following simple observation.

\vspace{2mm}

\textbf{Observation 1.} Let $\overline{A}$ be an element of the Schur ring $\mathcal{S}$. Then some partition of $A$ (which could be $A$ itself) gives a set of basic elements in $\mathcal{B}[\mathcal{S}]$


\section{Constructing cubic GRRs for dihedral groups}

At its core, our work in Section 2 is conceptually identical to the final example in the previous section, but rather than using specific values for the powers of group elements, we generalise the example by using parametric variables.


This will be our main result:


\begin{thms}\label{thm:main}
Let $n$ be an odd integer greater than 5 and let $r$, $s$, and $t$ be integers less than $n$ such that the difference of any two of them is relatively prime to $n$. If $3r-2s=t \bmod n$, then $\Cay(D_n, \{ab^r, ab^s, ab^t\})$ is a GRR of $D_n$.
\end{thms}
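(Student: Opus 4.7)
The plan is to prove that the Schur ring $\mathcal{S} = \langle\langle C \rangle\rangle$ is trivial, after which Theorems 1.5.2 and 1.5.3 immediately give that $\Cay(D_n, C)$ is a GRR. I would begin by applying an automorphism of $D_n$ to normalise the connecting set. The map $a \mapsto ab^{-kr}$, $b \mapsto b^k$, where $k \equiv (s - r)^{-1} \pmod n$ (which exists because $\gcd(s-r, n)=1$), sends $C$ to $C' = \{a, ab, ab^{-2}\}$; the relation $3r - 2s \equiv t$ is exactly what forces $ab^t$ to land on $ab^{-2}$ under this map. Cayley graphs whose connecting sets differ by a group automorphism are isomorphic, so it suffices to treat $C'$. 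The key identity $(ab^i)(ab^j) = b^{j-i}$ then gives
\[ \overline{C'}^{\,2} \;=\; 3\cdot 1 \;+\; T, \qquad T \;=\; \sum_{k \in \{\pm 1,\,\pm 2,\,\pm 3\}} b^k, \]
and the Schur-Wielandt Principle puts $T$ into $\mathcal{S}$.

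The crucial step is to expand $\overline{C'} \cdot T$: the coefficient of $ab^j$ counts the representations $j = i + k$ with $i \in \{0, 1, -2\}$ and $k \in \{\pm 1, \pm 2, \pm 3\}$. Provided $n$ is large enough that the ten indices $-5, -4, \dots, 4$ remain pairwise distinct modulo $n$, a direct enumeration shows that $ab^{-1}$ is the unique term with coefficient $3$, arising from the three representations $-1 = 0 + (-1) = 1 + (-2) = (-2) + 1$. Schur-Wielandt together with Observation~1 then makes $\{ab^{-1}\}$ a basic singleton set of $\mathcal{S}$.

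From here the argument cascades. Both $\overline{C'}\cdot(ab^{-1})$ and $(ab^{-1})\cdot\overline{C'}$ lie in $\mathcal{S}$, and a direct calculation shows their difference equals $b^{-2} - b^{2}$; Schur-Wielandt extracts $b^2$ as a singleton. Because $n$ is odd, $\gcd(2, n) = 1$, so the powers $b^{2\ell}$ exhaust $\langle b \rangle$, making every $b^k$ a singleton. Right-multiplying each $b^k$ by the singleton $ab^{-1}$ then produces every $ab^j$ as a singleton, so $\mathcal{S}$ contains $\overline{\{\gamma\}}$ for every $\gamma \in D_n$ and is therefore trivial.

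The principal obstacle is the coefficient-$3$ enumeration in the $\overline{C'} \cdot T$ step. One has to verify carefully that the hypotheses — $n$ odd, $n > 5$, and all three pairwise differences coprime to $n$ (which together force $\gcd(n,6) = 1$) — really do prevent the shifted exponents $-5, \dots, 4$ from colliding modulo $n$ and thereby creating a spurious second coefficient-$3$ term that would destroy the extraction of $ab^{-1}$ as a singleton. Once that check is discharged, the remainder of the proof is a mechanical chase through Schur-Wielandt closures.
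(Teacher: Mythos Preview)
Your approach differs substantially from the paper's. The normalisation of $C$ to $C'=\{a,ab,ab^{-2}\}$ via a group automorphism is a genuine simplification the paper never makes; the paper works with general $r,s,t$ throughout. The paper's route is also indirect: it first \emph{assumes} $\overline{C}$ is a basic element, squares $B_6=\overline{C}^{\,2}-3\cdot 1$ to force $b^{r-s}$ into either a singleton or the pair $\{b^{r-s},b^{s-r}\}$, and in each case multiplies back by $\overline{C}$ to contradict the assumption. That only shows $\overline{C}$ is not basic; the paper then invokes the separately proved Lemma~2.2 (which in turn rests on the binomial-expansion Lemma~2.1) to finish. Your direct extraction of a singleton $ab^{-1}$ from $\overline{C'}\cdot T$, followed by the commutator trick $\overline{C'}\,(ab^{-1})-(ab^{-1})\,\overline{C'}=b^{-2}-b^{2}$, bypasses both lemmas entirely and is considerably shorter.

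However, the ``principal obstacle'' you flag is a genuine gap, not a routine verification. The hypotheses do \emph{not} force the ten exponents $-5,\dots,4$ to be pairwise distinct modulo $n$: together they yield only $\gcd(n,6)=1$ and $n\ge 7$, and for $n=7$ one has $-5\equiv 2$, $-4\equiv 3$, $-3\equiv 4$. A direct count then shows that in $\overline{C'}\cdot T$ the coefficient-$3$ part is $ab^{2}+ab^{3}+ab^{4}+ab^{-1}$, so Schur--Wielandt does not isolate $ab^{-1}$ and your cascade never starts. Your argument is therefore valid only for $n\ge 11$; the case $n=7$ needs a separate treatment. (For what it is worth, the paper's own argument asserts the relevant exponent collisions are impossible ``since we are working in modulo $n$ where $n$ is definitely larger than $6$'', but the required bound is actually $n>9$: at $n=7$ every nontrivial power of $b$ acquires coefficient $5$ in $B_6^{\,2}$, so the paper's extraction of $b^{r-s}+b^{s-r}$ suffers from the same lacuna.)
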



Which gives us the tidy corollary:


\begin{cors} \label{cor:tidy}
Let $n$ be an odd integer with its smallest prime factor being $p$ greater than 5 and let $r$, $s$, and $t$ be distinct integers less than $p$. If $3r-2s=t \bmod n$ then $\Cay(D_n, \{ab^r, ab^s, ab^t\})$ is a GRR of $D_n$.
\end{cors}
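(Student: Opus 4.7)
The plan is to derive Corollary \ref{cor:tidy} as an immediate consequence of Theorem \ref{thm:main}: the hypotheses of the corollary are a special case of those of the theorem, so all that needs verifying is the translation of conditions. Beyond the modular identity $3r - 2s \equiv t \pmod{n}$, which is assumed in both, I must check that $n > 5$, that $r, s, t < n$, and most importantly that each pairwise difference of $r$, $s$, $t$ is coprime to $n$.

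The first two conditions are immediate. Since $p$ is a prime divisor of $n$ with $p > 5$, we have $n \geq p \geq 7$, hence $n > 5$. And since $r, s, t < p \leq n$, they are automatically less than $n$.

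The coprimality condition carries the only substantive content. Interpreting $r, s, t$ as distinct integers in $\{0, 1, \ldots, p-1\}$, each of the three pairwise differences $r-s$, $s-t$, $r-t$ is a nonzero integer whose absolute value is strictly less than $p$. By the minimality of $p$ among prime divisors of $n$, every prime dividing $n$ is at least $p$, and so none of these primes can divide a nonzero integer of absolute value less than $p$. Hence each pairwise difference is coprime to $n$, and Theorem \ref{thm:main} applies directly to give the desired GRR.

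I do not expect a real obstacle here: the corollary simply repackages Theorem \ref{thm:main} under a uniform bound on $r$, $s$, $t$ that automatically forces the coprimality hypothesis. The only interpretive care is fixing the convention that $r, s, t$ are residues in $\{0, 1, \ldots, p-1\}$, which is what makes their absolute differences strictly less than $p$ and thereby coprime to $n$.
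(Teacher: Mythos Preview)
Your proposal is correct and matches the paper's own justification, which simply notes that $r$, $s$, $t$ being less than $p$ forces their pairwise differences to be relatively prime to $n$, so Theorem~\ref{thm:main} applies. Your write-up merely fills in the details of that one-line observation.
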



This will allow us to easily make trivalent GRRs for several dihedral groups just by selecting appropriate values for $r$, $s$ and $t$. It will be especially useful for those dihedral groups of order equal to twice a prime greater than 5 (dihedral groups $D_p$), where all we would need would be $r$, $s$, and $t$ such that $3r - 2s = t \bmod p$.

To prove Theorem 2.1 however, we will need the help of the following two lemmas:

\begin{lems} \label{lem1}
Let $\gamma$ be an element of $\Gamma$ such that $\gamma$ has odd order $n$ and let $r$ and $t$ be two integers such that $r-t$ is relatively prime to $n$. If $\gamma^r + \gamma^t$ is an element of some $\mathcal{B}[\mathcal{S}_\Gamma]$ then  $\gamma^s + \gamma^{-s}$ is also an element of that same $\mathcal{B}[\mathcal{S}_\Gamma]$ for all integers $s$ relatively prime to $n$.
\end{lems}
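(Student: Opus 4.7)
The plan is to extract the symmetric element $\gamma^u + \gamma^{-u}$ (with $u := r - t$) from $\mathcal{S}$ via Schur--Wielandt, use a Chebyshev-type recurrence to produce every $\gamma^s + \gamma^{-s}$ as an element of $\mathcal{S}$, and finally use an isolation argument to promote each one to a basic element of $\mathcal{B}[\mathcal{S}_\Gamma]$.

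Since $\gamma^r + \gamma^t$ is basic, so is its inverse $\gamma^{-r} + \gamma^{-t}$, and the product
\[ (\gamma^r + \gamma^t)(\gamma^{-r} + \gamma^{-t}) = 2\cdot 1 + \gamma^u + \gamma^{-u} \]
lies in $\mathcal{S}$. Selecting the coefficient-$1$ terms via the Schur--Wielandt principle gives $E_1 := \gamma^u + \gamma^{-u} \in \mathcal{S}$. Setting $E_c := \gamma^{cu} + \gamma^{-cu}$ (so $E_0 = 2\cdot 1$), a direct expansion yields the recurrence
\[ E_1\, E_c = E_{c+1} + E_{c-1}, \]
hence $E_{c+1} = E_1 E_c - E_{c-1}$, and induction on $c$ places every $E_c$ in $\mathcal{S}$. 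Because $\gcd(u,n) = 1$, for every $s$ coprime to $n$ there exists $c$ with $cu \equiv s \pmod{n}$, so $\gamma^s + \gamma^{-s} = E_c \in \mathcal{S}$.

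By Observation 1, the set $\{\gamma^s, \gamma^{-s}\}$---genuinely of size two, since $\gcd(s,n) = 1$ and $n$ odd force $\gamma^s \neq \gamma^{-s}$---is partitioned by basic sets, so either $\{\gamma^s, \gamma^{-s}\}$ is itself basic or $\gamma^s$ and $\gamma^{-s}$ are both isolated. A short coefficient comparison in $\overline{\{a\}}\cdot\overline{\{b\}} = ab$ shows that products of isolated elements are isolated; hence if $\gamma^s$ were isolated, so would be every power of $\gamma^s$, and since $\gcd(s,n)=1$ makes $\gamma^s$ a generator of $\langle\gamma\rangle$, every power of $\gamma$ would be isolated. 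In particular $\gamma^r$ would be isolated, contradicting that it lies in the size-two basic set $\{\gamma^r,\gamma^t\}$. Thus $\{\gamma^s, \gamma^{-s}\}$ itself is a basic set.

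The main obstacle is precisely this final promotion step: the recurrence and Schur--Wielandt only yield membership in $\mathcal{S}$, while basicness requires a global argument that leverages the coprimality of $s$ to contradict the structure of the initial size-two basic set. Everything else reduces to the straightforward algebra of the Chebyshev recurrence and bookkeeping of exponents modulo $n$.
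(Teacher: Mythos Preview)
Your proof is correct and is arguably cleaner than the paper's. The endgame is identical: once $\gamma^{s}+\gamma^{-s}\in\mathcal{S}$, both you and the paper argue that if $\gamma^{s}$ were isolated then, since $\gcd(s,n)=1$, every power of $\gamma$ would be isolated, contradicting that $\{\gamma^{r},\gamma^{t}\}$ is a basic set of size two. The difference lies in how membership $\gamma^{s}+\gamma^{-s}\in\mathcal{S}$ is obtained. The paper takes the $n$-th power $(\gamma^{r}+\gamma^{t})^{n}=\sum_{i}\binom{n}{i}\gamma^{(r-t)i}$, pairs terms via $\binom{n}{i}=\binom{n}{n-i}$, and then (implicitly) uses that the binomial coefficients $\binom{n}{0},\binom{n}{1},\dots,\binom{n}{\lfloor n/2\rfloor}$ are pairwise distinct to separate the pairs by Schur--Wielandt. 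You instead multiply by the inverse basic element to get $2\cdot 1+\gamma^{u}+\gamma^{-u}$ directly, extract $E_{1}=\gamma^{u}+\gamma^{-u}$ by a single application of Schur--Wielandt, and then run the Chebyshev-type recurrence $E_{c+1}=E_{1}E_{c}-E_{c-1}$ inside the subring. Your route avoids the large binomial expansion and the (unstated in the paper) appeal to strict monotonicity of binomial coefficients, at the cost of needing closure under subtraction; the paper's route is a one-shot computation but leans on a numerical fact about binomial coefficients. Both are valid; yours is more self-contained.
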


\begin{proof}
First we observe that since $\mathcal{S}_\Gamma$ is closed then $(\gamma^r + \gamma^t)^n$ is in $\mathcal{S}_{\Gamma}$ whenever $(\gamma^r + \gamma^t)$ is in $\mathcal{S}_{\Gamma}$. We can use binomial expansion to get:

$(\gamma^r+\gamma^ t)^n = \displaystyle\sum_{i=0}^{n}[\binom{n}{i}\gamma^{ri}\gamma^{ t(n-i)}]$ which can be re-written as $\displaystyle\sum_{i=0}^{n}[\binom{n}{i}\gamma^{(r- t)i)}]\gamma^{tn}$. However, $\gamma$ is of order $n$ and so $\gamma^{tn} \equiv 1$. Therefore, the sum can once more be rewritten as $\displaystyle\sum_{i=0}^{n}[\binom{n}{i}\gamma^{(r- t)i)}]$.

\medskip
Since $r- t$ is relatively prime to the order of $\gamma$ then the summation $\displaystyle\sum_{i=0}^{n}[\binom{n}{i}\gamma^{(r- t)i)}]$ must include exactly $n$  distinct terms. In other words every power of $\gamma$ appears in that sum once and only once. Therefore, the coefficient of every $\gamma^{(r-t)i}$ is exactly $\binom{n}{i}$. Moreover, we observe that $\binom{n}{i} = \binom{n}{n-i}$ and so we can re-write this summation as $\displaystyle\sum_{i = 0}^{\lfloor\frac{n}{2}\rfloor}[\binom{n}{i}(\gamma^{(r- t)i}+\gamma^{(t-r)i})]$. We use the floor function here as $n$ may be odd.

\medskip
This means that for every $i \in [0, \frac{n}{2}]$ either $\gamma^{(r-t)i}$ occurs as a singleton in $\mathcal{B}[\mathcal{S}_\Gamma]$ or $\gamma^{(r-t)i}+\gamma^{(t-r)i}$ occurs in $\mathcal{B}[\mathcal{S}_\Gamma]$ due to the Schur-Wielandt principle.

However, in the cases where $i$ is relatively prime to $n$, $(r-t)i$ would also be relatively prime to $n$, and so $\gamma^{(r-t)i}$ can generate every power of $\gamma$. This would mean that should $\gamma^{(r-t)i}$ be a singleton element of the basis, then every power of $\gamma$ would be a singleton element of the basis also. However, $\gamma^r + \gamma^t$ is an element of $\mathcal{B}[\mathcal{S}_\Gamma]$, and so this would imply a contradiction. Therefore for every $i \in [0, \frac{n}{2}]$ which is relatively prime to $n$, we have $\gamma^{(r-t)i}+\gamma^{(t-r)i} \in \mathcal{B}[\mathcal{S}_\Gamma]$.

Let us recapitulate that at this point we have shown that if $n$ is the order of some $\gamma \in \Gamma$, $r$ and $t$ are two integers such that $(r-t)$ are relatively prime to $n$ and $(\gamma^r + \gamma^t)$ is in $\mathcal{B}[\mathcal{S}_\Gamma]$ then $\displaystyle\sum_{i = 0}^{\lfloor \frac{n}{2} \rfloor} [\binom{n}{i}(\gamma^{(r- t)i}+\gamma^{(t-r)i})]$ is in $\mathcal{S}_{\Gamma}$ and that those pairs of summands of the form $\gamma^{(r- t)i}+\gamma^{(t-r)i}$ appear as a basis element in $\mathcal{B}[\mathcal{S}_\Gamma]$ when $i$ is relatively prime to $n$. All that is left to show is that every number less than $n$ which is also relatively prime to it appears in the expression $\displaystyle\sum_{i = 0;\ i\nmid n}^{\lfloor \frac{n}{2} \rfloor} [\binom{n}{i}(\gamma^{(r- t)i}+\gamma^{(t-r)i})]$ as a power of $\gamma$ (or, equivalently, a value of $(r-t)i$) and this will complete the proof.

We re-write  $\displaystyle\sum_{i = 0 \ i\nmid n}^{\lfloor \frac{n}{2} \rfloor}[\binom{n}{i}(\gamma^{(r- t)i}+\gamma^{(t-r)i})]$ as $\displaystyle\sum_{j \in S}[\binom{n}{j} \gamma^{j}]$ where $S$ is the set $\{(r-t)i$ mod $n : i \in [n];\ i \nmid n\}$.

However, we now note that the set $S$ is in fact equivalent to the set of integers less than $n$ which also divide $n$. This is because the modulo multiplication of $[n]$ by any number relatively prime to $n$ can be represented as a permutation of a set unto itself, which is a bijection. Moreover the product of two numbers both relatively prime to $n$ is again relatively prime to $n$, which means the subset of $[n]$ which consists only of integers relatively prime to $n$ is fixed under such a bijection.

Therefore:

$\displaystyle\sum_{j \in S}[\binom{n}{j} \gamma^{j}] \equiv \displaystyle\sum_{j \in [n];\ j \nmid n}[\binom{n}{i} \gamma^j]$

Obviously every number less than $n$ which is less relatively prime to it appears as a power of $\gamma$ on the right hand side above, meaning that those powers of $\gamma$ also appear on the left hand side above.

This means that every $\gamma^{(r-t)i}$ such that $(r-t)i$ is relatively prime to $n$ appears in the summation $\displaystyle\sum_{i = 0}^{\lfloor\frac{n}{2}\rfloor}[\binom{n}{i}(\gamma^{(r- t)i}+\gamma^{(t-r)i})]$ and as we stated above, this completes the proof.

\end{proof}

\begin{lems}

\label{lem2}
Let $n$ be an odd integer and let $a$ and $b$ be the generators of the group $D_n$ with orders 2 and $n$ respectively. Also let $r$, $s$ and $t$ be unique integers less than or equal to $n$. The Schur ring $\langle\langle ab^r+ab^s+ab^t\rangle\rangle $ must be trivial if the following are true:

\begin{enumerate}

\item $ab^r+ab^s+ab^t \not \in \mathcal{B}[\langle\langle ab^r+ab^s+ab^t\rangle\rangle ]$
\item The absolute value of the difference between any two of $r$, $s$ and $t$ is relatively prime to $n$.
\item The sum of any two of $r$, $s$ and $t$ is not equal to twice the third variable taken modulo $n$.

\end{enumerate}
\end{lems}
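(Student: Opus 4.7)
The plan is to assume the Schur ring $\mathcal{S} := \langle\langle \overline{C}\rangle\rangle$, where $\overline{C} = ab^r + ab^s + ab^t$, is non-trivial and derive a contradiction. By condition~(1) together with Observation~1, the set $\{ab^r, ab^s, ab^t\}$ must partition non-trivially into basic sets of $\mathcal{S}$; since the set has three elements, only two refinements are possible: (A) all three of $ab^r, ab^s, ab^t$ are basic singletons, or (B) one is a basic singleton and the other two form a basic pair. In case~(A), a product of basic singletons is itself a basic singleton, so $(ab^r)(ab^s) = b^{s-r}$ is a singleton; since $\gcd(s-r, n) = 1$ by condition~(2), its powers exhaust $\langle b\rangle$ as singletons. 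Then $a = (ab^r)\cdot b^{-r}$ is also a singleton, hence every element of $D_n$ is a singleton and $\mathcal{S}$ is already trivial, as required. The substantive work is to rule out case~(B).

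For case~(B), label the singleton as $\{ab^z\}$ and the basic pair as $\{ab^x, ab^y\}$ with $\{x,y,z\} = \{r,s,t\}$. Using the dihedral identity $(ab^i)(ab^j) = b^{j-i}$, both simple quantities
\[ (ab^z)(ab^x + ab^y) = b^{x-z} + b^{y-z}, \qquad (ab^x + ab^y)^2 = 2 + b^{y-x} + b^{x-y} \]
lie in $\mathcal{S}$, and the Schur--Wielandt Principle extracts $b^{y-x} + b^{x-y} \in \mathcal{S}$ from the square. A two-term simple quantity in $\mathcal{S}$ must refine into basic elements, so either the whole pair is a basic set or both terms are basic singletons. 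If $b^k$ ever appears as a singleton with $\gcd(k, n) = 1$, the case~(A) argument forces every $b^i$, then $a$, and then $ab^x$ to be singletons, contradicting the hypothesis that $\{ab^x, ab^y\}$ is a basic pair of size~$2$. Since condition~(2) gives $\gcd(x-z, n) = \gcd(y-z, n) = \gcd(y-x, n) = 1$, the only way to remain in case~(B) is for both $\{b^{x-z}, b^{y-z}\}$ and $\{b^{y-x}, b^{x-y}\}$ to be basic pairs.

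The final step invokes Lemma~\ref{lem1}: applied to the basic pair $b^{y-x} + b^{x-y}$, whose exponent difference $2(y-x)$ is coprime to $n$ because $n$ is odd and $\gcd(y-x, n) = 1$, it yields $b^s + b^{-s} \in \mathcal{B}[\mathcal{S}]$ for every $s$ coprime to $n$. In particular, $\{b^{x-z}, b^{z-x}\}$ is a basic pair. But $\{b^{x-z}, b^{y-z}\}$ is also basic and shares the element $b^{x-z}$, and basic sets are either disjoint or equal; hence the two pairs coincide, giving $b^{y-z} = b^{z-x}$, i.e., $x + y \equiv 2z \pmod n$. This is exactly what condition~(3) forbids, so case~(B) cannot occur and $\mathcal{S}$ must be trivial. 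The main obstacle is this last maneuver: recognising Lemma~\ref{lem1} as the tool that upgrades the single basic pair arising from squaring $\overline{\{ab^x,ab^y\}}$ into the full family of basic pairs $\{b^s + b^{-s} : \gcd(s,n)=1\}$, producing a forced collision on $b^{x-z}$ that activates condition~(3) and closes the argument.
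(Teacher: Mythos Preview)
Your proof is correct and follows essentially the same approach as the paper's: split into the all-singletons case versus the singleton-plus-pair case, multiply to obtain a two-element simple quantity in $\langle b\rangle$, argue it must be a basic pair, and then invoke Lemma~\ref{lem1} to produce a second basic pair sharing an element with it, forcing $x+y\equiv 2z$ and contradicting condition~(3). The only difference is that the paper applies Lemma~\ref{lem1} directly to $b^{x-z}+b^{y-z}$ (whose exponent difference $y-x$ is already coprime to $n$), so your detour through the square $(ab^x+ab^y)^2$ to first extract $b^{y-x}+b^{x-y}$ is unnecessary, though not incorrect.
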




\begin{proof}
We know from (1) that $ab^r+ab^s+ab^t \not \in \mathcal{B}[\langle\langle ab^r+ab^s+ab^t\rangle\rangle ]$. But $ab^r+ab^s+ab^t$ is in $\mathcal{B}[\langle\langle ab^r+ab^s+ab^t\rangle\rangle ]$. This therefore leaves only two possibilities:

\begin{itemize}
\item $ab^{r}, ab^{s}, ab^{t}$ are all isolated in $\mathcal{B}[\langle\langle ab^r+ab^s+ab^t\rangle\rangle ]$, or,
\item Without loss of generality, $ab^{r}+ab^{t}$ is in $\mathcal{B}[\langle\langle ab^r+ab^s+ab^t\rangle\rangle ]$ while $ab^{s}$ is isolated.
\end{itemize}

The first possibility implies that $\langle\langle ab^r+ab^s+ab^t\rangle\rangle$ is trivial and so if this is true, we are done. Let us turn our attention to the second possibility.

\vspace{2mm}
This gives us that $(ab^{r}+ab^{t})ab^s$ is in $\langle\langle ab^r+ab^s+ab^t\rangle\rangle$. By expanding the brackets we get that $b^{s-r}+b^{s-t}$ is in $\langle\langle ab^r+ab^s+ab^t\rangle\rangle$. If $b^{s-r}$ and $b^{s-t}$ occur as singletons in the basis then it would imply that the whole Schur ring is trivial since $ab^s$ is already known to be in the basis and $\{ b^{s-r}, b^{s-t}, ab^s\}$ generates the whole group $D_n$. Let us assume then that $b^{s-r}+b^{s-t}$ is in $\mathcal{B}[\langle\langle ab^r+ab^s+ab^t\rangle\rangle]$, and mark this assumption as \textbf{\emph{Asm 1}}.

\medskip 
From the second part of our sufficient condition we also know that $t-r$ and $r-s$ are both relatively prime to $n$. Now $b^{s-r}+b^{s-t}$ is in $\mathcal{B}[\langle\langle ab^r+ab^s+ab^t\rangle\rangle ]$ and the difference between the two powers is $t-r$ which we know is relatively prime to $n$. Lemma \ref{lem1} therefore gives us that $b^{r-s} + b^{s-r}$ must also be in  $\mathcal{B}[\langle\langle ab^r+ab^s+ab^t\rangle\rangle ]$ since $r-s$ is also relatively prime to $n$. Both the elements $b^{s-r}+b^{s-t}$ and $b^{r-s} + b^{s-r}$ have $b^{s-r}$ within them, however, and under the assumption \textbf{\emph{Asm 1}} these should be basis elements. This is only possible if $b^{s-t} \equiv b^{r-s}$ and that implies that $r+t \equiv 2s$ mod $n$, which contradicts the third part of our sufficient condition.

\medskip 
Hence it must be the case that our assumption \textbf{\emph{Asm 1}} is false and so $\langle\langle ab^r+ab^s+ab^t\rangle\rangle$ must be trivial.
\end{proof}


We can now focus on proving Theorem \ref{thm:main}:

\vspace{4mm}


 \begin{proof}

We will prove Theorem \ref{thm:main} by showing that $ab^r+ab^s+ab^t$ cannot be an element of $\mathcal{B}[\langle \langle ab^r+ab^s+ab^t\rangle \rangle]$ and then use Lemma \ref{lem2} to meet the sufficient conditions of Theorem \ref{thm:trivial<<C>>} and so bring about the result.

We substitute $t$ for $3r -2s$ at this point for simplicity's sake. We will begin by assuming that $ab^r+ab^s+ab^{3r-2s}$ is in $\mathcal{B}[\langle \langle ab^r+ab^s+ab^{3r-2s}\rangle \rangle]$ and then show how this must imply a contradiction. We label this assumption as \textbf{\emph{(Asm 2)}} for further reference.

So let us begin by considering the following statement, which we label as \textbf{\emph{(i)}} for further reference:


\[(ab^r+ab^s+ab^{3r-2s})^2 = 3(1)+ b^{r-s}+b^{2(r-s)}+b^{3(r-s)}+b^{-(r-s)}+b^{-2(r-s)}+b^{-3(r-s)}\]

Assuming that \textbf{\emph{(Asm 2)}} is true then by Observation 1 it must be possible to express \textbf{\emph{(i)}} as the sum of elements of $\mathcal{B}[\langle \langle ab^r+ab^s+ab^{3r-2s}\rangle \rangle]$. However, we shall show that doing so implies a contradiction which means that \textbf{\emph{(Asm 2)}} must be false.

We begin by noting that the group elements on the right hand side of \textbf{\emph{(i)}} must either be isolated in the basis of the Schur ring or be grouped with other elements also on the right hand side of \textbf{\emph{(i)}}.

For the sake of brevity let us denote the sum of the elements on the right hand side as $B_6$. We note now that $(B_6)^2$ is equal to


\begin{align*}
6(1) + 2B_6 + b^{2(r-s)}+b^{-2(r-s)}+b^{6(r-s)}+b^{-6(r-s)} + \\
2(b^{r-s}+b^{s-r}+b^{5(r-s)}+b^{-5(r-s)})+3(b^{4(r-s)}+b^{-4(r-s)}).
\end{align*}

We notice that the total coefficient of $b^{r-s}$ on the right hand side is 4 (remember it occurs in $B_6$), and so, by the Schur Wielandt principle the only elements which $b^{r-s}$ can be grouped with in the Schur ring's basis must also have a coefficient of 4 in the expansion of $B_6^2$. However, with the exception of $b^{s-r}$, achieving this requires that $\pm 4(r-s), \pm 5(r-s)$ or $\pm 6(r-s)$ is equal to some element in $B_6$. This implies $x(r-s)$ = $y(r-s)$ for $x= \pm 1, \pm 2$ or $\pm 3$ and $ y = \pm 4, \pm 5$ or $\pm 6$. Regardless of the values of $x$ and $y$ we get $r-s =0$ implies $r =s$ since we are working in modulo $n$ where $n$ is definitely larger than 6. Therefore $b^{r-s}$ can only possibly be grouped with $b^{s-r}$ or be an isolated element in the basis.

Let us first consider the possibility that $b^{r-s}$ is an isolated element. If so, then $b^{3(s-r)}$ must also be an isolated element in the basis. Since our assumption is that $ab^r+ab^s+ab^{3r-2s}$ is in $\mathcal{B}[\langle \langle ab^r+ab^s+ab^{3r-2s}\rangle \rangle]$ we can consider the product of $ab^r+ab^s+ab^{3r-2s}$ and $b^{3(r-s)}$, which must be in $\langle \langle ab^r+ab^s+ab^{3r-2s}\rangle \rangle$.


\[(ab^r+ab^s+ab^{3r-2s})b^{3(r-s)} \equiv ab^{4r-3s}+ab^{3r-2s}+ab^{6r-5s}.\]


Note how the term $ab^{3r-2s}$ appears on the right hand side above. Since we are assuming that $ab^r+ab^s+ab^{3r-2s}$ is in $\mathcal{B}[\langle \langle ab^r+ab^s+ab^{3r-2s}\rangle \rangle]$, $ab^r$ and $ab^s$ must appear too. There are only two ways this is possible:

\begin{itemize}
\item $4r-3s = s$ and $6r-5s = r$, or,
\item $4r-3s = r$ and $6r-5s = s$.
\end{itemize} 

All the equations above, however, imply that $r=s$ which is a contradiction. And so $b^{r-s}$ cannot be an isolated element in the basis without contradicting \textbf{\emph{(Asm 2)}}. Therefore, the only way to avoid contradicting \textbf{\emph{(Asm 2)}} is for $b^{r-s}+b^{s-r}$ to be an element in the basis.

This gives us that $(ab^r + ab^s + ab^{3r-2s})(b^{r-s}+b^{s-r})$ is in the Schur ring. This expression expands to $2ab^{2r-s}+ab^{2s-r}+ab^{4r-3s}+ ab^s +ab^r$. Notably the terms $ab^s, ab^r$ appear here and due to \textbf{\emph{(Asm 2)}} this means that $ab^{3r-2s}$ must appear also. There are three ways this could happen:

\begin{itemize}
\item $2r-s=3r-2s$ gives us $r = s$, or,
\item $4r-3s=3r-2s$ gives us $r = s$, or,
\item $2s-r=3r-2s$ gives us $r = s$.
\end{itemize}

But all these ways imply $r=s$ which is again a contradiction. This rules out the possibility that $b^{r-s}+b^{s-r}$ is an element in the basis.

Therefore, \textbf{\emph{(Asm 2)}} must be false and  $ab^r+ab^s+ab^{3r-2s}$ is not in $\mathcal{B}[\langle \langle ab^r+ab^s+ab^{3r-2s}\rangle \rangle]$ for if it were, then it would be a basis element which cannot be expressed as a linear sum of basis elements when squared, which contradicts the definition of a Schur ring. Lemma 2.2 gives us that this implies that $\langle\langle ab^r+ab^s+ab^t\rangle\rangle $ is trivial and so $\Aut(\langle \langle ab^r+ab^s+ab^t \rangle \rangle) \equiv D_n$ meaning that $\Cay(D_n, \{ ab^r,ab^s,ab^t \})$ is a GRR of $D_n$ by Theorem \ref{thm:trivial<<C>>}.

 \end{proof}


Corollary \ref{cor:tidy} follows immediately by observing that $r$, $s$ and $t$, being less than $p$, immediately implies that both they and their differences are relatively prime to $n$. 

We also note the following theorem, although we invite the reader to read its proof in a companion paper to this one which is available on arXiv \cite{Eb&La2} as that proof is very similar to the proof above.


\begin{thms}
	Let $n$ be an odd integer greater than 5 and let $r$, $s$, and $t$ be integers less than $n$ such that the difference of any two of them is relatively prime to $n$. If $3r+s=4t$ mod $n$, then $\Cay(D_n, \{ab^r, ab^s, ab^t\})$ is a GRR of $D_n$.
\end{thms}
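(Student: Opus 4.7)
The plan is to mirror the proof of Theorem~\ref{thm:main}. Assume for contradiction that $ab^r+ab^s+ab^t$ is a basic element of $\mathcal{S}=\langle\langle ab^r+ab^s+ab^t\rangle\rangle$ (call this \textit{Asm 2}). Substituting $s=4t-3r$ and writing $d=t-r$, a direct computation in $D_n$ gives
\[
(ab^r+ab^s+ab^t)^2 \;=\; 3\cdot 1 \;+\; B_6, \qquad B_6 \;=\; b^d+b^{-d}+b^{3d}+b^{-3d}+b^{4d}+b^{-4d},
\]
so $B_6\in\mathcal{S}$. The whole argument then consists of restricting the basic element $Y\in\mathcal{B}[\mathcal{S}]$ containing $b^d$ to just two possibilities and ruling out each.

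The first step is to expand $B_6^2$ and locate a coefficient class, picked out by the Schur-Wielandt Principle, of the form $b^{kd}+b^{-kd}$ with $\gcd(2k,n)=1$. Generically, the coefficient $3$ occurs only at $b^{\pm 2d}$, so $b^{2d}+b^{-2d}\in\mathcal{S}$; by Observation~1 this sum is either a single basic element or splits into two singletons. In the singleton subcase, iterated multiplication together with $\gcd(2d,n)=1$ forces every power of $b$ to be a singleton, so $Y=\{b^d\}$. In the basic subcase, Lemma~\ref{lem1} applies to $b^{2d}+b^{-2d}$ (exponent difference $4d$ coprime to $n$) and forces $b^d+b^{-d}$ to also be basic, so $Y=\{b^d,b^{-d}\}$. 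Thus $Y\in\{\{b^d\},\{b^d,b^{-d}\}\}$.

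The second step is to eliminate both possibilities by direct multiplication and comparison with the assumed basic element $\{ab^r,ab^s,ab^t\}$. For $Y=\{b^d\}$, expanding $(ab^r+ab^s+ab^t)\,b^d = ab^t+ab^{5t-4r}+ab^{2t-r}$ and requiring (by Observation~1) that $\{ab^r,ab^s,ab^t\}$ lie inside this sum forces $\{ab^r,ab^s\}=\{ab^{5t-4r},ab^{2t-r}\}$; every possible matching reduces to $r=t$ or $5d\equiv 0\pmod n$, both excluded by $n>5$ and $\gcd(d,n)=1$. For $Y=\{b^d,b^{-d}\}$, expanding $(ab^r+ab^s+ab^t)(b^d+b^{-d})$ produces six distinct terms, and one checks exponent by exponent that the missing element $ab^s=ab^{4t-3r}$ matches none of them (each candidate equation again collapses to $r=t$ or $5d\equiv 0\pmod n$). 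Hence \textit{Asm 2} fails, Lemma~\ref{lem2} then applies, and Theorem~\ref{thm:trivial<<C>>} finishes the proof.

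The main obstacle, compared to Theorem~\ref{thm:main}, is that in the new $B_6^2$ the element $b^d$ no longer has a uniquely occurring coefficient: it is tied with $b^{\pm 3d}$ and $b^{\pm 4d}$ at coefficient $2$, so Schur-Wielandt cannot directly pin down the basic element of $b^d$. The detour through the distinguished coefficient of $b^{\pm 2d}$, combined with Lemma~\ref{lem1}, is what transports the restriction back to $b^d$; for small $n$ where collisions alter the $B_6^2$ coefficients, one selects an analogous uniquely occurring coefficient class to play the same role. The remaining case-checking for the final multiplications is then routine.
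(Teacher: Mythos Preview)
Your outline mirrors the paper's own stance: the paper defers the proof to a companion article, saying only that it is ``very similar'' to the proof of Theorem~\ref{thm:main}, and that is exactly the template you follow. The computations you give are correct in the generic range: with $s=4t-3r$ and $d=t-r$ one indeed has $(ab^r+ab^s+ab^t)^2=3\cdot 1+B_6$ with $B_6=b^{\pm d}+b^{\pm 3d}+b^{\pm 4d}$, and your elimination of the two candidates $\{b^d\}$ and $\{b^d,b^{-d}\}$ via the products $(ab^r+ab^s+ab^t)\,b^{\pm d}$ goes through cleanly for all admissible $n$.

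There is, however, a concrete gap you do not flag. When you invoke Lemma~\ref{lem2} at the end, its condition~(3) requires that no one of $r,s,t$ be the arithmetic mean of the other two modulo $n$. With $s=4t-3r$ the equation $r+t\equiv 2s$ becomes $7(r-t)\equiv 0$, which holds identically when $n=7$; so for $n=7$ Lemma~\ref{lem2} is simply not available, and your final step fails there. A separate argument (or a direct check) is needed for that single value.

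Your small-$n$ hand-wave for $B_6^2$ also hides more than you suggest. Working out the coefficients, for $n\ge 17$ the coefficient $3$ indeed singles out $b^{2d}+b^{-2d}$ and your route through Lemma~\ref{lem1} is clean. For $n=11$ one can instead use the unique coefficient $4$ at $b^{\pm 4d}$. But for $n=13$ the collapsed coefficients are $6,2,3,2,2,3,3$ on $b^{0},b^{\pm d},\ldots,b^{\pm 6d}$, so no pair $b^{kd}+b^{-kd}$ is isolated by a single application of Schur--Wielandt; one has to iterate (e.g.\ pass to $B_6^3$ or multiply the two coefficient classes) before Lemma~\ref{lem1} can be brought in. These are finitely many cases and can be dispatched, but ``selects an analogous uniquely occurring coefficient class'' undersells the work.
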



Just as with the prior theorem, Theorem 2.2 has a tidy corollary for prime numbers:


\begin{cors} \label{cor:tidy2}
Let $n$ be an odd integer with its smallest prime factor being $p$ greater than 5 and let $r$, $s$, and $t$ be distinct integers less than $p$. If $3r+s=4t$ mod $n$, then $\Cay(D_n, \{ab^r, ab^s, ab^t\})$ is a GRR of $D_n$.
\end{cors}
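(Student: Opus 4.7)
The plan is to obtain Corollary~\ref{cor:tidy2} as an immediate specialization of Theorem~2.2, mirroring the way Corollary~\ref{cor:tidy} was deduced from Theorem~\ref{thm:main}. The arithmetic hypothesis $3r+s=4t \bmod n$ is identical in both statements, so the only thing requiring verification is the number-theoretic hypothesis of Theorem~2.2: that the pairwise differences of $r$, $s$, and $t$ are each relatively prime to $n$.

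The key observation I would use is that any positive integer strictly less than $p$ must be coprime to $n$. Indeed, if $m < p$ and $q$ is any prime factor of $m$, then $q \le m < p$, so $q$ is strictly smaller than the smallest prime factor of $n$ and therefore cannot divide $n$. I would then apply this to the three pairwise differences $|r-s|, |r-t|, |s-t|$, which are nonzero positive integers strictly less than $p$ because $r, s, t$ are distinct and each less than $p$. This immediately shows that all three differences are coprime to $n$.

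With the coprimality condition verified and the congruence $3r+s=4t \bmod n$ supplied directly by the hypothesis of the corollary, Theorem~2.2 applies and yields that $\Cay(D_n, \{ab^r, ab^s, ab^t\})$ is a GRR of $D_n$, which is exactly the conclusion required.

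I do not expect any real obstacle here: all of the Schur-ring and combinatorial work has already been carried out in the proof of Theorem~2.2 itself (which, as the authors note, lives in the companion paper \cite{Eb&La2}). The present corollary is simply the clean packaging of Theorem~2.2 for the case when $n$ has no very small prime factors, and its proof reduces to the elementary prime-factor observation above, together with a direct invocation of the theorem.
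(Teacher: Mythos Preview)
Your proposal is correct and follows essentially the same approach as the paper: the paper obtains Corollary~\ref{cor:tidy2} from Theorem~2.2 in exactly the way Corollary~\ref{cor:tidy} was obtained from Theorem~\ref{thm:main}, namely by observing that $r$, $s$, $t$ being less than $p$ forces their pairwise differences to be relatively prime to $n$. Your write-up spells out this coprimality argument a bit more carefully than the paper does, but the idea is identical.
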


\vspace{2mm}

In the next section we shall give a few examples in which the above results are applied. 

\pagebreak

\section{Examples of GRR constructions}

We will be focusing on applying Corollary \ref{cor:tidy} to when $n$ itself is prime, giving us GRRs of $D_p$, as this will provide clear and concise examples.




\subsection{A simple application}

First let us start with $p =11$. To satisfy the equation $3r-2s=t$ we will take $r=3$, $s=4$ and $t=1$. Therefore $Cay(D_{11}, \{ab^3, ab^4, ab\})$ is a GRR of $D_{11}$. We present the Cayley colour graph for this connecting set in Figure \ref{fig:graph2} so that the reader can see which element of the connecting set gave rise to which edge: $ab$ is red, $ab^3$ is blue and $ab^4$ is green. These edge-colourings hold true irrespective of which vertex is the identity element of $D_{11}$ since every automorphism of a GRR only maps edges of the same colour to each other \cite{ABCFHMST}.



\begin{figure}[H]
\begin{center}
  \includegraphics[width=\linewidth]{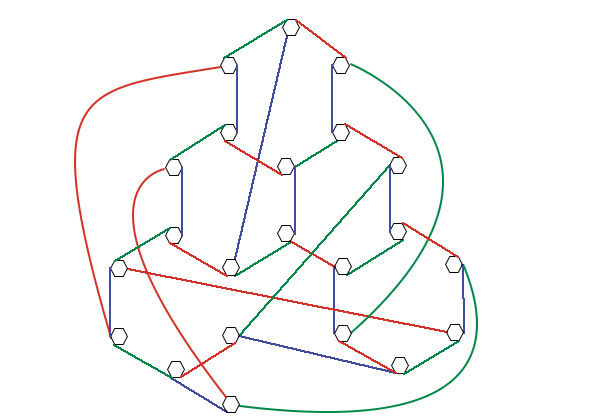}
  \caption{$Cay(D_{11}, \{ab, ab^3, ab^4\})$, a GRR of $D_{11}$.}
  \label{fig:graph2}
  \end{center}
\end{figure}

\subsection{The inheritability of connecting sets}

In the previous example we used the values $\{ 3, 4, 1 \}$ for the parameters $\{r, s, t\}$ respectively to construct a GRR for $D_{11}$. However, with a very simple theorem we can show that these values for $\{ r, s ,t \}$ can be used to construct GRRs for an infinite number of dihedral groups $D_p$ where $p$ is prime.

To do this, we prove the following theorem. Note that all arithmetic in the proof is only modular arithmetic if its explicitly said to be so, and all order relations are to be understood as an ordering on the positive integers:

\begin{thm}Let $p_2 > p_1$ be prime. Suppose that, for a given $\alpha, \beta , \gamma$ there exist integers $r,s,t$ such that $\alpha r - \beta s \equiv \gamma t  \ mod \ p_1.$ Then, if $\alpha r - \beta s \in [0, p_1]$ then  $\alpha r - \beta s \equiv \gamma t \ mod \ p_2$ also holds. \label{thm:321}
\end{thm}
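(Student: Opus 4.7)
The approach is to promote the hypothesised congruence modulo $p_1$ to an honest integer identity $\alpha r - \beta s = \gamma t$; once that identity is in hand, the desired congruence modulo $p_2$ (indeed modulo any integer whatsoever) follows automatically by reducing both sides.

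First I would unwrap the hypothesis as $\alpha r - \beta s - \gamma t = k p_1$ for some integer $k$. The explicit constraint $\alpha r - \beta s \in [0, p_1]$ bounds the left-hand term; combined with the tacit constraint in the intended setting of Corollary \ref{cor:tidy} (where $r,s,t$ are all less than $p_1$ and the coefficients $\alpha, \beta, \gamma$ are small enough that $\gamma t$ also sits in $[0, p_1)$), the quantity $(\alpha r - \beta s) - \gamma t$ has absolute value strictly less than $p_1$. The only multiple of $p_1$ in that open range is $0$, forcing $k = 0$ and yielding the integer identity $\alpha r - \beta s = \gamma t$. Reducing this integer equation modulo $p_2$ then finishes the argument in one line.

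The main obstacle, slight as it is, is the bookkeeping needed to rule out a non-trivial ``wrap-around'' when translating a mod-$p_1$ congruence into an integer equation; the hypothesis $\alpha r - \beta s \in [0, p_1]$ is precisely what prevents this, since it says that the value on the left already coincides with its own canonical representative mod $p_1$. Notably, primality of $p_1$ and $p_2$ plays no role in the argument itself --- those hypotheses appear only because the theorem is phrased in the setting where it will be applied, via Corollary \ref{cor:tidy}, to recycle a triple $(r,s,t)$ known to produce a GRR for $D_{p_1}$ into one that produces a GRR for every larger prime $p_2$.
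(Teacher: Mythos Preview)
Your approach is essentially the paper's: both rewrite the congruence as $\alpha r - \beta s = \gamma t + k p_1$, argue that $k=0$, and then reduce the resulting integer identity modulo $p_2$. You are in fact more careful than the paper about justifying $k=0$, correctly noting that this step also needs $\gamma t \in [0,p_1)$ (tacitly supplied by the intended application), whereas the paper asserts it follows from $\alpha r - \beta s \in [0,p_1]$ alone.
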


\vspace{2mm}

\begin{proof}

$\alpha r - \beta s \equiv \gamma t  \ mod \ p_1$ implies that $\alpha r - \beta s = \gamma t + xp_1$ for some integer $x$. We observe that $\alpha r - \beta s \in [0, p_1]$ means that $x = 0$.

Now consider the expression  $\alpha r - \beta s  \ mod \ p_2$. Since $p_2 > p_1$ we can re-write this expression as $(\gamma t + xp_1) \ mod \ p_2$. When $x = 0$ this simplifies to $\gamma t \ mod \ p_2$.

Therefore if $\alpha r - \beta s \in [0, p_1]$ then $\alpha r - \beta s \equiv \gamma t \ mod \ p_2$ holds.

\end{proof}

\vspace{2mm}

This theorem has a useful corollary:

\begin{cors} \label{cor:inherit}
Let $p_2 > p_1$ be prime. Let $D_{p_1}$ be a dihedral group such that $D_{p_1} \equiv \langle a,b : a_1^2 \equiv b_1^{p_1} \equiv a_1b_1a_1b_1 \equiv 1  \rangle$. Let $r,s,t$ be integers which satisfy $3r - 2s \equiv t \ mod \ p_1$ and therefore let us construct a connecting $C_1$ set for a GRR of $D_{p_1}$ where $C_1 \equiv \{a_1b_1^r, a_1b_1^s, a_1b_1^t\}$. Then those same integers $r,s,t$ can be used to construct a connecting set $C_2$ which admits a GRR for $D_{p_2}$ where $D_{p_2} \equiv \langle a,b : a_2^2 \equiv b_2^{p_2} \equiv a_2b_2a_2b_2 \equiv 1  \rangle$ and $C_2 \equiv \{a_2b_2^r, a_2b_2^s, a_2b_2^t\}$.
\end{cors}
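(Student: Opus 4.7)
The plan is to chain Theorem \ref{thm:321} with the main construction theorem (Theorem \ref{thm:main}). Given the integers $r,s,t < p_1$ satisfying $3r-2s \equiv t \pmod{p_1}$ that witness a GRR for $D_{p_1}$, I would first invoke Theorem \ref{thm:321} with the specialisation $\alpha=3,\ \beta=2,\ \gamma=1$ to upgrade the congruence to $3r-2s \equiv t \pmod{p_2}$. Then I would verify that $r,s,t$ continue to satisfy the hypotheses of Theorem \ref{thm:main} (equivalently Corollary \ref{cor:tidy}) at the larger modulus $p_2$, so that the Cayley graph $\Cay(D_{p_2},\{a_2b_2^r,a_2b_2^s,a_2b_2^t\})$ is forced to be a GRR of $D_{p_2}$.

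The hypothesis check for Theorem \ref{thm:main} at $n=p_2$ is short. Since $r,s,t<p_1<p_2$, the three exponents are automatically less than $p_2$ and, being the distinct exponents of a GRR of $D_{p_1}$, are pairwise distinct. For the coprimality condition, each pairwise difference $|r-s|,|r-t|,|s-t|$ is a nonzero integer of absolute value at most $p_1-1 < p_2$; because $p_2$ is prime, any nonzero residue below $p_2$ is a unit, so these differences are relatively prime to $p_2$. Together with the relation $3r-2s \equiv t \pmod{p_2}$ obtained in the first step, this matches the hypotheses of Theorem \ref{thm:main} exactly, and the conclusion follows.

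The one subtlety, and the step I expect to be the main obstacle, is applying Theorem \ref{thm:321}: that theorem requires $3r-2s \in [0,p_1]$, not just $r,s < p_1$. I would handle this by insisting that the representatives $r,s,t$ are chosen in $\{0,1,\dots,p_1-1\}$ with $3r-2s$ already lying in $[0,p_1]$ before we even declare them a GRR connecting set for $D_{p_1}$; this is no loss of generality because one can always translate the triple $(r,s,t)$ by a common shift (replacing $b_1$ by an appropriate power of itself) without changing the isomorphism type of the Cayley graph, and in particular one can place $s$ so that $3r-2s$ is small. With that normalisation, $3r-2s = t + x p_1$ forces $x=0$ as in the proof of Theorem \ref{thm:321}, and the same integer equality $3r-2s = t$ now holds on the nose, so it reduces modulo $p_2$ to give the desired congruence. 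Once this observation is in place, the rest of the argument is a direct invocation of the preceding results.
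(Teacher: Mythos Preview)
Your proposal is correct and matches the paper's intended argument: the paper states this as an immediate corollary of Theorem~\ref{thm:321} (specialised to $\alpha=3$, $\beta=2$, $\gamma=1$) together with Corollary~\ref{cor:tidy} applied at the larger prime $p_2$, and gives no separate proof. The one place you go beyond the paper is in attempting to justify the side condition $3r-2s\in[0,p_1]$ by a common shift of $(r,s,t)$; the paper does not argue this at all but simply treats it as part of the standing hypothesis, remarking after the table of examples that ``the condition $\alpha r-\beta s\in[0,p]$ was respected'' when the connecting sets were constructed.
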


With this, any solutions for $r,s,t$ which we find when trying to form a GRR for some dihedral group $D_{p_1}$ with $p_1$ prime can be re-used to construct connecting sets for an infinite number of other dihedral groups, specifically those dihedral groups $D_{p_2}$ where $p_2 > p_1$ and prime.
 
Figure \ref{fig:graph3} is a GRR of $D_{13}$ which we construct using the same values we found when constructing a GRR for $D_{11}$ in the previous example.

\begin{figure}[H]
  \includegraphics[width=\linewidth]{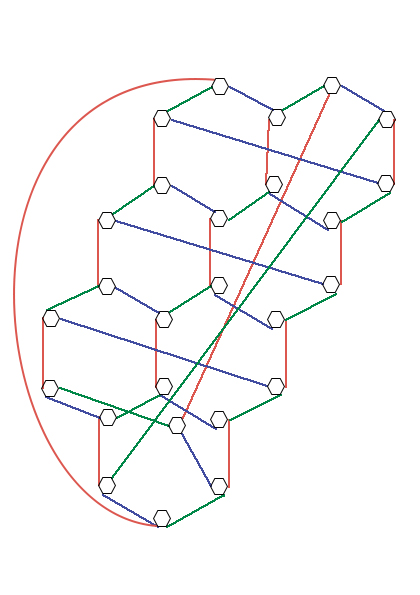}
  \caption{$Cay(D_{13}, \{ab, ab^3, ab^4\})$, a GRR of $D_{13}$.}
  \label{fig:graph3}
\end{figure}


\subsection{Some further solutions}
Below is a table of connecting sets for $D_p$ for a few prime values of $p$, found using the equation $3r-2s = t$ mod $p$ (see Corollary \ref{cor:tidy}). While constructing these connecting sets, the condition $\alpha r - \beta s \in [0, p]$  was respected and so we know from Corollary \ref{cor:inherit} that all the sets in the table below are in fact connecting sets for GRRs of $D_p$ not just for the prime number given in its row but for all prime numbers larger than it too.

\vspace{2mm}

For brevity we present only the prime numbers up to 100.

\vspace{2mm}

\begin{center}
\begin{tabular}{r|| c | c | c | c }
$p$&$r$&$s$&$t$&Connecting Set($ \{ab^r, ab^s, ab^t\}$)\\
\hline
7&2&3&0&$ \{ab^2, ab^3, a\}$\\
11&3&4&1&$ \{ab^3, ab^4, ab\}$\\
13&4&6&0&$ \{ab^4, ab^6, a\}$\\
17&5&7&1&$ \{ab^5, ab^7, ab\}$\\
19&6&9&0&$ \{ab^6, ab^9, a\}$\\
23&7&10&1&$ \{ab^7, ab^{10}, ab\}$\\
29&9&13&1&$ \{ab^9, ab^{13}, ab\}$\\
31&10&15&0&$\{ab^{10}, ab^{15}, a\}$\\
37&12&18&0&$\{ab^{12}, ab^{18}, a\}$\\
41&13&19&1&$\{ab^{13}, ab^{19}, ab\}$\\
43&14&21&0&$\{ab^{14}, ab^{21}, a\}$\\
47&15&22&1&$\{ab^{15}, ab^{22}, ab\}$\\
53&17&25&1&$\{ab^{17}, ab^{25}, ab\}$\\
59&19&28&1&$\{ab^{19}, ab^{28}, ab\}$\\
61&20&30&0&$\{ab^{20}, ab^{30}, a\}$\\
67&22&33&0&$\{ab^{22}, ab^{38}, a\}$\\
71&23&34&1&$\{ab^{23}, ab^{34}, ab\}$\\
73&24&36&0&$\{ab^{24}, ab^{36}, a\}$\\
79&26&39&0&$\{ab^{26}, ab^{39}, a\}$\\
83&27&40&1&$\{ab^{27}, ab^{40}, ab\}$\\
89&29&43&1&$\{ab^{29}, ab^{43}, ab\}$\\
97&32&48&0&$\{ab^{32}, ab^{48}, a\}$\\

\end{tabular}
\end{center}

\vspace{2mm}

\pagebreak

\section{Conclusion}

By making use of a few theorems and lemmas about Schur rings we have shown easy ways to construct trivalent GRRs for dihedral groups $D_n$ where $n$ is an odd number. It is the opinion of the authors that these results are evidence that even a basic study of Schur rings can be fruitful not only in the search for GRRs but also in other areas of algebraic graph theory.

We are now in a position where we can produce two GRRs for each member of an infinite family of dihedral groups within moments. For example, $\{a, ab^2, ab^3 \}$ and $\{a, ab^3, ab^4 \}$ are connecting sets for GRRs of all $D_n$ where $n$ is prime and larger than 11, by using Corollaries \ref{cor:tidy}, \ref{cor:tidy2} and \ref{cor:inherit}. 

This can be the groundwork for several further lines of inquiry. For example, are the methods we have outlined sufficient to find all possible trivalent GRRs for the relevant dihedral groups? If no, can we perhaps find a similar approach to account for the remaining ones? If yes, can these methods be used as groundwork for enumeration theorems?

Moreover, the methods we have described here can be expanded to find GRRs for dihedral groups with a valency greater than 3 and we can also slightly relax the conditions on the parameter $n$. This is all to be discussed another time, however.

These lines of inquiry we have outlined may yet yield further useful results.

\bibliographystyle{plain}

\end{document}